\newtheorem{theorem}{Theorem}[section]
\newtheorem{corollary}[theorem]{Corollary}
\newtheorem{lemma}[theorem]{Lemma}
\newtheorem{proposition}[theorem]{Proposition}
\newtheorem{definition}[theorem]{Definition}
\newtheorem{remark}[theorem]{Remark}
\newtheorem{question}[theorem]{Question}
\numberwithin{equation}{section}
\def\square{{\vcenter{\vbox{\hrule height.3pt
        \hbox{\vrule width.3pt height5pt \kern5pt
           \vrule width.3pt}
        \hrule height.3pt}}}}
\def\sS {{\cal S}}
\def\wt{\widetilde}
\def\ol{\overline}
\def\bee{\begin{equation}}
\def\bet{\begin{theorem}}
\def\bep{\begin{proposition}}
\def\bef{\begin{proof}}
\def\bel{\begin{lemma}}
\def\bec{\begin{corollary}}
\def\bed{\begin{definition}}
\def\ber{\begin{remark}}
\def\eee{\end{equation}}
\def\eet{\end{theorem}}
\def\eep{\end{proposition}}
\def\eef{\end{proof}}
\def\eel{\end{lemma}}
\def\eec{\end{corollary}}
\def\eed{\end{definition}}
\def\eer{\end{remark}}
\def\R{{\mathbb R}}
 \def\qq {\qquad}
\def\wt{\widetilde}
\def\ol{\overline}
\def\square{{\vcenter{\vbox{\hrule height.3pt
        \hbox{\vrule width.3pt height5pt \kern5pt
           \vrule width.3pt}
        \hrule height.3pt}}}}
\def\tlint{{- \kern-0.85em \int \kern-0.2em}}  
\def\dlint{{- \kern-1.05em \int \kern-0.4em}}  
\def\sS {{\cal S}}
\begin{document}

\title{Two results from Mandelbaum's paper: ``The dynamic complementarity problem''} 
\author{Richard F. Bass}

\date{\today}

\maketitle

\begin{abstract}
A draft of a paper by Mandelbaum, ``The dynamic complementarity problem,'' was 
circulated in 1987, but has never been published. We give an exposition of two 
important results from that paper which are not readily accessible in the literature. 

The first is an example of a Skorokhod problem in two dimensions in the quadrant for which there is not uniqueness. The second is a proof of uniqueness for the Skorokhod problem in two dimensions in the quadrant in a critical case.
%
%
\end{abstract}


\section{Introduction}\label{sect-intro}

In 1987  A.\ Mandelbaum circulated a draft of a paper titled
``The dynamic complementarity problem.'' Although this paper has been cited many
times in the ensuing decades, it has never been published. Of particular interest to many is his example of a 
Skorokhod problem for which a solution exists but 
for which uniqueness does not hold. 

Although this example is of great importance, to the best of our knowledge the only publicly available exposition is in the Ph.D.\ dissertation of Whitley \cite{Whit}, which is a bit difficult to find (and may require paying 
to get a copy). See also Stewart \cite{St}, which  uses similar methods to handle a related 
problem. We thought that it would be worthwhile to give an exposition of Mandelbaum's counterexample that is freely available on the Internet.

Less well known is another result in that paper. Consider the matrix
$R$ that appears in the Skorokhod problem (details in a moment).
Uniqueness has been proved  for the Skorokhod problem for a certain class of matrices 
$R$ by Harrison and Reiman \cite{HR} and Williams \cite{W95} 
pointed out that the proof works for a much  larger class of matrices; see below.
In his paper Mandelbaum proves uniqueness for a certain critical case.
See Remark \ref{uniq-summary} for a summary of the known results in two dimensions
and where things stand in higher dimensions.

We provide proofs for the counterexample and for the critical case.
We emphasize that these notes are expository and all the ideas are due to 
Mandelbaum. We thank him for providing us with a copy of his draft paper. We also would like to thank  K.\ Burdzy\ and R.\ Williams for many helpful conversations on the
subject of the Skorokhod problem.

Let us turn to describing the Skorokhod problem and the corresponding
Skorokhod equation. Except for Remark \ref{r-uniq}, for the remainder of these notes we consider the two-dimensional case only.

For a vector $b=(b_1,b_2)\in \R^2$, we say $b\ge 0$ if $b_1\ge 0$ and $ b_2\ge 0$.
Let $D=\{b\in \R^2: b\ge 0\}$. 

\begin{definition}\label{intro-SP} {A driving function $f$  is a continuous function
from $[0,\infty)$ to $\R^2$ with $f(0)\ge 0$. The Skorokhod problem is to find\\
(1) $g$ a continuous function from $[0,\infty)\to D$;\\
(2) $m$ a continuous  function on $[0,\infty)$ with $m(0)=0$ and
each $m_j(t)$ is non-decreasing, $j=1,2$;\\
\phantom{xxx} such that \\
(3) $g(t)=f(t)+Rm(t)$ for all $t\ge 0$;\\
\phantom{xxx}  and\\
(4) $m_j$ increases only when $g_j=0$, $j=1,2.$
}
\end{definition}

The equation (3) is known as the Skorokhod equation. 
It arises as a way to represent
reflecting Brownian motion when $f$ is a Brownian path.

Note that (4) is equivalent to
\begin{equation}\label{intro-dcp}
\int_0^\infty g_j(t)\, dm_j(t)=0, \qq j=1,2.
\end{equation}
Finding $g$ and $m$ satisfying (1)--(3) and \eqref{intro-dcp}
is one type of dynamic complementarity problem.

Mandelbaum is concerned with the following.

\begin{question}\label{Q1}{
For which matrices $R$ does there exist a unique solution to the 
Skorokhod problem for \textbf{every} driving
function $f(t)$ with $f(0)\ge 0$?}
\end{question}

One can also ask:
\begin{question}\label{Q2}{
For which matrices $R$ does there 
fail to be a unique solution to the Skorokhod problem
 when the driving function is a typical Brownian
path?}
\end{question}

We will not address Question \ref{Q2} in these notes except in Remark \ref{uniq-BM}.

Existence of a solution to the Skorokhod problem is thoroughly understood.
It is known (see, e.g., \cite{W95} or \cite{BK}) that there will exist at least one
solution for every driving function if and only if $R$ is a completely-$\sS$ matrix. In two dimensions this  means that the diagonal entries of $R$ are positive and that there exists
$x\ge 0$ such that $Rx > 0$. 

Suppose $R_{11}, R_{22}>0$. If we let $\wt m_i(t)=R_{ii}m_i(t)$ and $\wt R_{ij}=R_{ij}/R_{jj}$, $i,j=1,2$,
then the
equation $g=f+Rm$ can be rewritten as $g=f+\wt R\wt m$.
Therefore there is no loss of generality in assuming that the diagonal elements
of $R$ are equal to 1.

Suppose  from now on that
\begin{align}
R&=\begin{pmatrix}
        1 & a_1  \\
        a_2& 1 \\
        \end{pmatrix}.\label{intro-form}
\end{align}
It is easy to see that 
$R$ will be completely $\sS$ when 
 (1) at least one of $a_1,a_2$ is positive
 or when (2) $a_1a_2<1$. If neither (1) nor (2) hold, then $R$ will not be
completely-$\sS$.

As for uniqueness of the Skorokhod problem, 
let $Q=I-R$ and let $|Q|$ be the matrix with each coordinate of $Q$ replaced by its absolute value. Thus
$|Q|=\begin{pmatrix}
        0 & |a_1|  \\ 
        |a_2|& 0 \\
        \end{pmatrix}.$
A calculation shows that the spectral radius of $|Q|$ is $\pm \sqrt{|a_1a_2|}$.
When this spectral radius is strictly less than 1,
there is uniqueness for the Skorokhod problem for every driving function by
\cite{HR} as improved by \cite{W95}.

In Section \ref{sect-cter} we take $a_2=1$, $a_1=-2$, and show that there
exist two distinct solutions to the Skorokhod problem. The proof is Mandelbaum's, although we are able to streamline it a bit since we are in a special case of
his more general results.

In Section \ref{sect-uniq} we consider the critical case where $|a_1a_2|=1$,
$a_2>0$, and 
$a_1<0$. Mandelbaum's proof of uniqueness
is a quite brief sketch, so we flesh out the proof with additional details.

\section{A counterexample}\label{sect-cter}

We present Mandelbaum's example  of a deterministic version of the Skorokhod
problem in two dimensions where uniqueness does not hold. 
We mention that Bernard and el Kharroubi \cite{BK} gave an example in three dimensions where the driving function is linear.

Set
\begin{align}\notag
R&=\begin{pmatrix} 
        1 & -2  \\
        1& 1 \\
        \end{pmatrix}.
\end{align}

\begin{theorem}\label{cter-T1}
There exists a driving function $f(t)$ for which there exist two distinct solutions $g(t)$
to the Skorokhod equation
$$g(t)=f(t)+Rm(t).$$
\end{theorem}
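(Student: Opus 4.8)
The idea is to construct an explicit driving function $f$ together with two different pairs $(g,m)$ solving the Skorokhod equation. The matrix $R=\begin{pmatrix}1&-2\\1&1\end{pmatrix}$ has the feature that pushing off the face $\{g_1=0\}$ (using the first column $(1,1)^T$) moves the point \emph{into} the interior in a direction that increases $g_2$, while pushing off the face $\{g_2=0\}$ (using the second column $(-2,1)^T$) moves the point so that $g_1$ \emph{decreases}. This mismatch in orientation is what allows a nontrivial ``loop'' of reflections to be inserted. The most natural way to exploit it is to look for a driving function that is constant, say $f\equiv 0$, or more safely $f(t)\equiv(0,0)$ is not allowed since we want $f(0)\ge 0$ — so take $f(t)\equiv (0,0)$, which is fine, and seek solutions other than the trivial one $g\equiv 0$, $m\equiv 0$.

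\textbf{Key steps.} First I would write down the trivial solution: with $f\equiv 0$, clearly $g\equiv 0$, $m\equiv 0$ satisfies (1)--(4). Next I would construct a second, nonzero solution. Since both $g_1$ and $g_2$ must stay at the corner region, I look for $g$ that stays on the boundary $\partial D$ for all time, spiraling between the two faces. Concretely, on a time interval where $g_1=0$ (so only $m_1$ may increase), the equation forces $g_2(t)=f_2(t)+m_1(t)$ and $0=g_1(t)=f_1(t)+m_1(t)-2m_2(t)$; on an interval where $g_2=0$ (so only $m_2$ increases), $g_1(t)=f_1(t)+m_1(t)-2m_2(t)$ and $0=g_2(t)=f_2(t)+m_1(t)+m_2(t)$. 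I would choose $f$ and a sequence of switching times $t_0=0<t_1<t_2<\cdots$ accumulating at some $T<\infty$ so that on $[t_{2k},t_{2k+1}]$ the solution lies on $\{g_1=0\}$ and on $[t_{2k+1},t_{2k+2}]$ on $\{g_2=0\}$, with the constraint equations above determining the increments of $m_1,m_2$ on each piece. The geometry (the ``$-2$'') makes the required increments of $m$ at each stage a fixed positive multiple of the previous ones, giving a geometric series; choosing the ratio less than $1$ makes $m$, and hence $g$, continuous and bounded up to and beyond the accumulation time $T$, after which one can continue by $g\equiv g(T)$, $m$ constant (or glue to the trivial solution if $g(T)=0$). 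To make $f(0)\ge 0$ and keep $f$ as simple as possible, I expect one can take $f$ piecewise linear, or even reverse-engineer a single continuous $f$ for which \emph{both} $g\equiv 0$ and the spiraling $g$ solve the problem; the cleanest version is likely $f$ chosen so that the spiral starts at the corner, runs out along one axis and spirals back to the corner, the whole motion being ``invisible'' to a second observer who just sits at $0$.

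\textbf{Main obstacle.} The delicate point is the bookkeeping that makes everything fit together simultaneously: (i) the local pushing directions must be exactly the columns of $R$ with nonnegative, nondecreasing $m_1,m_2$; (ii) $g$ must remain in $D$ (not just on $\partial D$) throughout, in particular during each reflection phase the coordinate that is not being held at $0$ must stay $\ge 0$; and (iii) the infinitely many switches must accumulate in finite time with $m$ still continuous, which forces a careful choice of the switching times and of $f$ so that the increments decay geometrically. Verifying the complementarity condition \eqref{intro-dcp} on the limiting configuration, and checking that the constructed $f$ is genuinely continuous at the accumulation time $T$, is where the real work lies; the rest is linear algebra with the $2\times 2$ matrix $R$. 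I would organize the construction around the self-similar structure (each reflection loop is a scaled copy of the previous one) so that continuity at $T$ reduces to summing a geometric series.
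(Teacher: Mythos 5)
Your starting point --- take $f\equiv 0$ and exhibit a nontrivial solution alongside the trivial one --- cannot work, and this sinks the main line of the proposal. With $f\equiv 0$ the Skorokhod equation gives $g_2=m_1+m_2$, which is automatically nonnegative and vanishes at time $t$ only if $m_1(t)=m_2(t)=0$; the complementarity condition $\int g_2\,dm_2=0$ then forces $m_2\equiv 0$, after which $g_1=m_1$ and the same argument forces $m_1\equiv 0$. So for $f\equiv 0$ the solution is unique, and the same difficulty persists for constant or linear driving functions in two dimensions --- the paper notes that linear driving functions yield counterexamples only in dimension three (Bernard and el Kharroubi). The non-uniqueness must be injected through a genuinely oscillating $f$, and in the actual construction neither of the two solutions is the trivial one.

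Your fallback remark about reverse-engineering $f$ is the right instinct, but it omits the device that makes the construction go through. The paper starts from a piecewise linear path $u$ spiralling \emph{outward} from the origin, with the infinitely many switches accumulating at $t=0$ rather than at a later time $T$, and with its vertices lying alternately on the lines $(Ru)_1=u_1-2u_2=0$ and $(Ru)_2=u_1+u_2=0$. Writing the Jordan decomposition $u=m-\ol m$ with $m,\ol m$ nondecreasing and starting at $0$, one sets $f=-(Rm\land R\ol m)$, $g=(Ru)^+$, $\ol g=(Ru)^-$ (componentwise); the identities $a-(a\land b)=(a-b)^+$ and $b-(a\land b)=(a-b)^-$ give both Skorokhod equations at once, and complementarity follows because, for instance, $m_2$ increases only on intervals where $u$ moves from the line $(Ru)_1=0$ to the line $(Ru)_2=0$, during which $(Ru)_2\le 0$ and hence $g_2=0$. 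Your concerns about geometric decay and continuity at the accumulation time are real but are the easy part (the vertices sit at $t_n=2^{-n}$ with $|u(t_n)|$ of order $2^{-n/2}$, so continuity at $0$ is immediate); the missing idea is the simultaneous construction of $f$ and of \emph{two} nontrivial solutions from the positive and negative parts of $Ru$.
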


\begin{proof}
We define an auxiliary function $u=(u_1,u_2)$ mapping $[0,1]$ to $\R^2$ as follows. Let $t_n=2^{-n}$, $n\ge 0$.
Set $u(0)=0$. For $k$ a non-negative integer, set
\begin{align*}
u(t_{4k})=(-2^{-2k}, 2^{-2k}), \qquad & u(t_{4k+1})=(-2^{-2k},-2^{-2k-1}),\\
u(t_{4k+2})=(2^{-2k-1}, -2^{-2k-1}), \qquad &u(t_{4k+3})=(2^{-2k-1},
2^{-2k-2}).
\end{align*}
For $t$ between $t_{n+1}$ and $t_{n}$ we linearly interpolate between 
$u(t_{n+1})$ and $u(t_{n})$.
Thus $u$ is a continuous piecewise linear function 
of bounded variation that starts at 0 and spirals out away
from the origin. Note that each $t_n$ is either on the line $u_1+u_2=0$ or
the line $u_1-2u_2=0$.
See Figure 1, which is a drawing of the $(u_1,u_2)$ plane with the graph of
$\{u(t): t_7\le t\le t_0\}$ shown in green.

\begin{figure}[ht]
\centering
\caption{Graph of $u(t)$}
\includegraphics[width=0.9\linewidth]{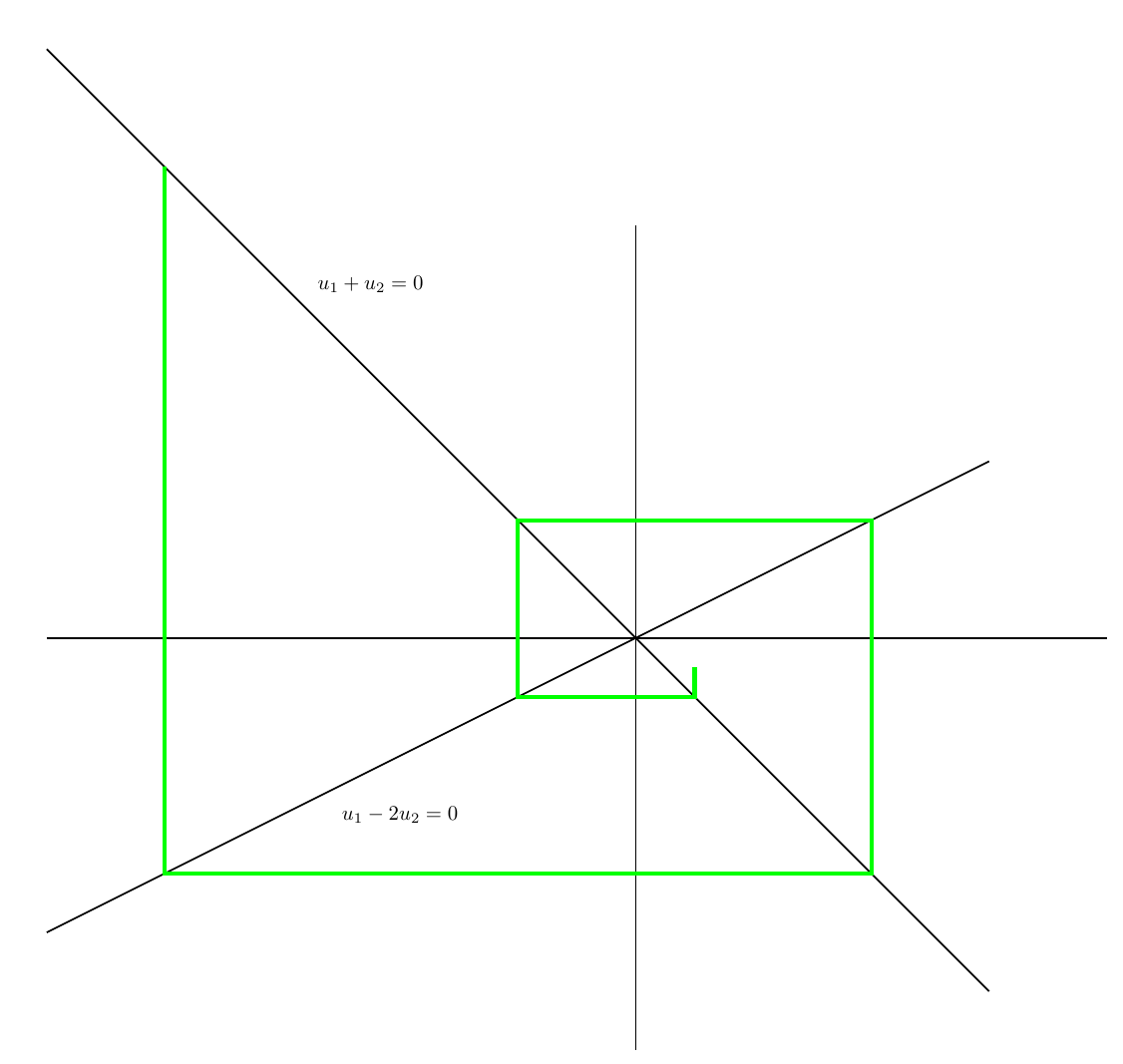}
\label{cter-fig1}
\end{figure}

For $j=1,2$, each $u_j$ is of bounded variation, and we write 
$$u_j(t)=m_j(t)-\ol m_j(t),$$
where $m_j, \ol m_j$ both are non-decreasing and start at 0. 
Define
\begin{equation}
f(t)=-(Rm(t)\land R \ol m(t)),\label{cter-f-def}
\end{equation}
where this equation means that $f_j(t)=-((Rm(t))_j\land (R\ol m(t))_j)$ for
$j=1,2$. 
Set 
\begin{align}
g(t)&=(Rm(t)-R\ol m(t))^+,\label{cter-g-def}\\
\ol g(t)&=(Rm(t)-R\ol m(t))^-,\notag
\end{align}
where again these equations are interpreted component-wise.

We claim $(g,f,m)$ and $(\ol g,f, \ol m)$ are two distinct solutions to the
Skorokhod equation.
Clearly $g, \ol g\ge 0$, $g\ne \ol g$, $m,\ol m$ both start at 0 and
are non-decreasing, and all the functions are continuous.
That $$g(t)=f(t)+Rm(t), \qq \ol g(t)=f(t)+R\ol m(t)$$
follow from the identities
$a-(a\land b)=(a-b)^+$ and $ b-(a\land b)=(a-b)^-.$

It remains to show that $m_j$ only increases when $g_j=0$, $j=1,2$, and 
the same with $\ol m_j, \ol g_j$. We do the case where $m_2$ increases, the other
three cases being exactly similar. $m_2$ increases only when $t$ is in
an interval $[t_{4k+1}, t_{4k}]$ for some non-negative integer $k$. In that 
interval $u$ moves vertically  from
the line $(Ru)_1=u_1-2u_2=0$ to the line $(Ru)_2=u_1+u_2=0$. 
Since  $Rm(t)-R\ol m(t)=Ru(t)$, for such $t$
$$(Rm(t))_2-(R\ol m(t))_2=(Ru(t))_2\le 0.$$
Therefore for such $t$, using \eqref{cter-g-def} we have $g_2(t)=0$ as required.
\end{proof}

\begin{remark}\label{cter-general}{\rm The same argument works if in the matrix $R$ we replace $-2$ by any real that is strictly less than $-1$.}
\end{remark}

\begin{remark}\label{cter-BV}{\rm In the above proof, for each interval
$[t_{n+1},t_n]$ only one of $u_1,u_2$ changes, and for whichever $u_j$ that
changes, either $u_j$ increases or decreases over the entire time interval.
It is thus straightforward to see
 how the decomposition of $u$ into the difference of non-decreasing
functions $m$ and $\ol m$ occurs.}
\end{remark}

\section{Uniqueness - the critical case}\label{sect-uniq}

We examine the critical case when the
spectral radius is exactly 1. 

\begin{lemma}\label{uniq-L1} Suppose $C>0$. There is a unique solution for every continuous
driving function for the deterministic Skorokhod problem with matrix $R=\begin{pmatrix}
        1 & a_1  \\
        a_2& 1 \\
        \end{pmatrix}$
if and only if there is a unique solution for every continuous driving function
for the Skorokhod problem with matrix $S=\begin{pmatrix}
        1 & Ca_1  \\
        a_2/C& 1 \\
        \end{pmatrix}$.
\end{lemma}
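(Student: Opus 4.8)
The plan is to exhibit an explicit bijection between solutions of the two Skorokhod problems by rescaling the second coordinate of everything in sight. Concretely, given a driving function $f=(f_1,f_2)$ for the matrix $R$, I would associate to it the driving function $\wt f=(f_1, C f_2)$ for the matrix $S$; conversely, dividing the second coordinate by $C$ sends driving functions for $S$ back to driving functions for $R$. Since $C>0$, this map preserves the condition $f(0)\ge 0$, continuity, and all the other standing requirements on a driving function.

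Next I would check that the correspondence extends to solutions. Suppose $(g,m)$ solves the Skorokhod problem for $R$ with driving function $f$. Set $\wt g=(g_1, C g_2)$ and $\wt m=(m_1, m_2/C)$ — note that the regulator for the \emph{first} constraint keeps its multiplier, while the regulator for the \emph{second} constraint is divided by $C$, because the second column of $S$ has its off-diagonal entry scaled up by $C$ and its diagonal entry unchanged. The key computation is that $\wt g = \wt f + S\wt m$ holds componentwise: the first row reads $g_1 + C a_1 \cdot (m_2/C) = g_1 + a_1 m_2$, which is $(f+Rm)_1 = g_1$ after adding $f_1$, wait — I would simply verify $(S\wt m)_1 = m_1 + a_1 m_2 = (Rm)_1$ and $(S\wt m)_2 = C\big((a_2/C)m_1 + m_2/C\big) = a_2 m_1 + m_2 = C\,(Rm)_2$, so that $\wt f + S\wt m = (f_1 + (Rm)_1,\ Cf_2 + C(Rm)_2) = (g_1, Cg_2) = \wt g$. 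Then $\wt g\ge 0$ iff $g\ge 0$ (as $C>0$), $\wt m$ is non-decreasing and starts at $0$ iff $m$ is, and the complementarity condition $\wt m_j$ increases only when $\wt g_j=0$ is equivalent to the same for $m_j, g_j$ since $\wt g_j$ vanishes exactly when $g_j$ does and $\wt m_j$ increases exactly when $m_j$ does. Hence $(\wt g, \wt m)$ solves the $S$-problem for $\wt f$.

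Because the map is an involution up to the reparametrization $C\mapsto 1/C$, it is a bijection between the solution set of the $R$-problem for $f$ and the solution set of the $S$-problem for $\wt f$; in particular one set is a singleton iff the other is. Since $f\mapsto\wt f$ is itself a bijection on the space of all continuous driving functions with non-negative initial value, uniqueness for every driving function holds for $R$ iff it holds for $S$, which is the claim.

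I do not expect any serious obstacle here: the entire content is the bookkeeping of which multipliers scale by $C$ and which by $1/C$, and the observation that scaling a coordinate by a positive constant preserves sign, monotonicity, and the support condition in \eqref{intro-dcp}. The only point requiring a little care is to scale $m_2$ (not $m_1$) by $1/C$ when passing from $R$ to $S$, matched with scaling $g_2$ (not $g_1$) by $C$; getting these paired correctly is what makes the Skorokhod equation transform as required.
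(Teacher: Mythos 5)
Your overall strategy --- a coordinate-rescaling bijection between solutions of the $R$-problem and the $S$-problem --- is exactly the paper's, but the verification of the Skorokhod equation contains an error that makes your chosen scaling inconsistent. Once you fix $\wt m_1=m_1$ and (as the first row forces) $\wt m_2=m_2/C$, the second row of $S$ gives
$$(S\wt m)_2=\frac{a_2}{C}\,\wt m_1+\wt m_2=\frac{a_2}{C}\,m_1+\frac{1}{C}\,m_2=\frac{1}{C}(Rm)_2,$$
not $C\,(Rm)_2$ as you wrote: the factor $C$ you inserted in front of $\bigl((a_2/C)m_1+m_2/C\bigr)$ has no source, and in any case $a_2m_1+m_2$ equals $(Rm)_2$, not $C(Rm)_2$. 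Consequently $\wt f_2+(S\wt m)_2=Cf_2+\tfrac1C(Rm)_2$, which is not $Cg_2$ unless $C=1$; with your definitions the identity $\wt g=\wt f+S\wt m$ fails.

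The repair is immediate and is what the paper does: scale the \emph{entire} second equation by $1/C$, i.e.\ set $\wt f_2=f_2/C$, $\wt g_2=g_2/C$, $\wt m_2=m_2/C$, leaving the first coordinates unchanged. Then $\wt g=\wt f+S\wt m$ holds, and all your remaining observations --- positivity, monotonicity, the complementarity condition, and the bijectivity of $f\mapsto\wt f$ on the set of driving functions --- go through verbatim, since multiplying a coordinate by a positive constant preserves each of these. So the ``pairing'' you flagged as the delicate point is in fact the opposite of what you stated: $g_2$ and $f_2$ must be scaled by the \emph{same} factor $1/C$ as $m_2$, not by $C$.
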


\begin{proof}
 If we write out $g=f+ Rm$ in coordinates and multiply the second equation by
$1/C$, we get
\begin{align*}
g_1&=f_1+ m_1+ a_{1}m_2,\\
\frac1{C}g_2&=\frac1{C}f_2+\frac1{C} a_{2}m_1+\frac1{C} m_2.
\end{align*}
Let $\wt m_1=m_1$, $\wt g_1=g_1$, $\wt f_1=f_1$, and
$$\wt m_2=\frac1{C}m_2, \quad \wt g_2=\frac1{C}g_2, \quad \wt f_2=\frac1{C}f_2.$$
We then have
$$\wt g=\wt f+ S \wt m.$$

It follows that there will be two distinct solutions to the Skorokhod problem for a driving
function $f$ with respect to the matrix $R$ if and only if there are  two distinct solutions
to the Skorokod problem with driving function $\wt f$ with respect to the
matrix $S$.
\end{proof}

\begin{theorem}\label{uniq-T1}
If $|a_1a_2|=1$, $a_2>0$, and $a_1<0$, there is a unique solution for every
driving function $f$ to the Skorokhod problem when $R$ is of the form \eqref{intro-form}.
\end{theorem}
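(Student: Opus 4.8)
By Lemma \ref{uniq-L1} we may normalize the off-diagonal entries. Since $|a_1a_2|=1$, $a_2>0$, $a_1<0$, applying the lemma with $C=\sqrt{-1/a_1}=\sqrt{a_2}$ (or its reciprocal) reduces us to the single matrix $R=\begin{pmatrix} 1 & -1 \\ 1 & 1 \end{pmatrix}$. So the plan is to prove uniqueness for this one matrix. Existence is already known (one checks $R$ is completely-$\sS$: the diagonal is positive and $R(1,1)^T=(0,2)^T\ge 0$ while $R(1,\tfrac12)^T=(\tfrac12,\tfrac32)^T>0$), so the whole content is uniqueness.

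The standard approach, following Harrison--Reiman and Williams, is to suppose $(g,m)$ and $(\ol g,\ol m)$ are two solutions with the same driving function $f$, set $h=m-\ol m$ (a BV function, not necessarily monotone), and estimate the total variation of $h$ on $[0,t]$. One writes $g-\ol g=R(m-\ol m)=Rh$, decomposes $m_j-\ol m_j$ according to its Jordan decomposition, and uses the complementarity condition (4) to control the increments. The key computation is this: when $m_1$ is increasing, $g_1=0$, so $(g-\ol g)_1=-\ol g_1\le 0$, i.e. $(Rh)_1\le 0$ there; similarly when $\ol m_1$ is increasing, $(Rh)_1\ge 0$, and analogously for the second coordinate. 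In the strictly-contractive case $\sqrt{|a_1a_2|}<1$ this yields $\Var_{[0,t]}(h)\le \sqrt{|a_1a_2|}\,\Var_{[0,t]}(h)$ after a Gronwall-type argument, forcing $h\equiv 0$. In our critical case the contraction constant is exactly $1$, so this crude bound only gives $\Var_{[0,t]}(h)\le \Var_{[0,t]}(h)$, which is vacuous. The whole difficulty is to extract something from the borderline inequality.

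The way to break the tie, which I expect is Mandelbaum's idea, is to track not just the total variation of $h=m-\ol m$ but a more refined quantity that exploits the sign structure forced by $R=\begin{pmatrix}1&-1\\1&1\end{pmatrix}$ — for instance, a weighted or rotated functional of $(h_1,h_2)$, or the variation of $Rh$ itself, which because $R$ here is a scalar multiple of a rotation ($R=\sqrt2\,\mathrm{Rot}(\pi/4)$) has a clean geometric meaning. One sets up an integral inequality of the form $\Phi(t)\le \Phi(0)+\int_0^t(\cdots)\,d(\text{something})$ where the critical case makes the coefficient exactly $1$ but the ``something'' is a measure that can only charge the set where the two solutions differ, and on that set a strict inequality sneaks back in from the geometry of the quadrant (the two reflection directions $R_{\cdot 1}=(1,1)^T$ and $R_{\cdot 2}=(-1,1)^T$ are not opposite, so corners cannot be traversed for free). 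Concretely I would: (i) reduce to $R=\begin{pmatrix}1&-1\\1&1\end{pmatrix}$; (ii) set $h=m-\ol m$, write each $h_j$ via its Jordan decomposition into the part where $m_j$ pushes and the part where $\ol m_j$ pushes; (iii) on each such part read off the sign of the corresponding coordinate of $Rh = g-\ol g$ from complementarity; (iv) combine the four sign conditions to show that the ``rotated'' variation functional is non-increasing and in fact strictly decreases unless $g=\ol g$; (v) conclude $h\equiv 0$, hence also $g\equiv\ol g$.

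The main obstacle is step (iv): in the strictly contractive regime the argument is a soft Gronwall estimate, but at criticality one must show that the only way the inequality is saturated is the trivial solution, which requires genuinely using that the two directions of reflection span an \emph{acute} (not straight) cone and that $g,\ol g\ge 0$ prevents the difference $g-\ol g$ from lingering on the ``bad'' side. I anticipate the bookkeeping with the four monotone pieces of $h_1,h_2$ and the four faces/corners of the quadrant to be delicate, and the final rigidity argument — turning $\le$ into $=\!\!\implies\!$ trivial — to be the real crux; everything else is the familiar Skorokhod-map variation estimate specialized to a rotation matrix.
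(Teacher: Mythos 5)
Your reduction to the single matrix $R=\begin{pmatrix}1&-1\\1&1\end{pmatrix}$ via Lemma \ref{uniq-L1} is the paper's first step too (though check your constant: with $a_1a_2=-1$ and $a_2>0$ you want $C=a_2$, giving $Ca_1=-1$ and $a_2/C=1$; neither $\sqrt{a_2}$ nor $1/a_2$ lands on $(-1,1)$). Your step (iii) is also essentially the paper's computation: writing $u=m-\ol m$, complementarity gives $(Ru)_j\,du_j=-g_j\,d\ol m_j-\ol g_j\,dm_j\le 0$, i.e.\ $(u_1+u_2)\,du_2\le 0$ and $(u_1-u_2)\,du_1\le 0$. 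But your step (iv) — which you yourself flag as ``the real crux'' — is not carried out, and this is a genuine gap: you correctly diagnose that the Harrison--Reiman variation estimate degenerates at criticality and that some ``rotated functional'' is needed, but you never identify it, and the argument cannot be completed without it.

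The missing idea is concrete and elementary. Take $v=\max(|u_1|,|u_2|)$ and partition the $(u_1,u_2)$-plane into four sectors $N,E,S,W$ by the lines $u_2=u_1$ and $u_2=-u_1$ — which for this $R$ are exactly the zero sets of $(Ru)_2$ and $(Ru)_1$. In each sector $v$ equals one of $\pm u_1,\pm u_2$, and exactly one of the two displayed inequalities has a coefficient of known strict sign there; e.g.\ in $N$ one has $v=u_2$ and $u_1+u_2>0$, so $dv=du_2\le 0$, while in $W$ one has $v=-u_1$ and $u_1-u_2<0$, so $du_1\ge 0$ and again $dv\le 0$. Hence $dv\le 0$ on $\{v\ne 0\}$. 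Note also that you are anticipating the wrong endgame: no strictness or rigidity argument is needed to ``break the tie.'' Since $v(0)=0$, $v\ge 0$, and $v$ is non-increasing wherever it is nonzero, $v$ can never become positive, so $v\equiv 0$, whence $m=\ol m$ and $g=\ol g$. The whole difficulty you anticipate in turning $\le$ into a strict decrease simply does not arise once the correct Lyapunov function is in hand.
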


\begin{proof} 
Letting  $C=1/a_2$ and applying Lemma \ref{uniq-L1} we see that it
suffices to look at $R$ given by
\begin{align} 
R&=\begin{pmatrix}
        1 & -1  \\
        1& 1 \\
        \end{pmatrix}. \label{uniq-form}
\end{align}
 
Suppose there are two solutions $(g,m)$ and $(\ol g, \ol m)$.
Let $u=m-\ol m$. Then
$$g-\ol g=Ru.$$

Using \eqref{intro-dcp}, for $j=1,2$
\begin{align*}
(Ru)_j\, du_j&=(g_j-\ol g_j)\, d(m_j-\ol m_j)\\
&=g_j\, dm_j-g_j\, d\ol m_j-\ol g_j\, dm_j+\ol g_j\, d\ol m_j\\
&=-g_j\, d\ol m_j-\ol g_j\, dm_j\\
&\le 0,
\end{align*}
where we write $d\mu\le 0$ for a signed measure $\mu$  if $\mu(B)\le 0$ for all 
Borel sets $B$, or equivalently, if $\mu$ has no positive part.

With our choice of $R$, this equation is the same as
\begin{align}
(u_1+u_2)\, du_2&\le 0 \label{E2}\\
(u_1-u_2)\, du_1&\le 0.\notag
\end{align}

Divide the $(u_1,u_2)$ plane into 4 pieces by using the lines $u_2=u_1$ and
$u_2=-u_1$. We label clockwise the pieces $N, E, S, W$ (for north, east, south, and west). To assign the boundaries, we define
\begin{align*}
N&=\{(u_1,u_2): u_2>0, -u_2< u_1\le u_2\},\\
E&=\{(u_1,u_2): u_1>0, -u_1< u_2\le u_1\},\\
S&=\{(u_1,u_2): u_2<0, u_2< u_1\le -u_2\},\\
W&=\{(u_1,u_2): u_1<0, u_1<u_2\le -u_1\}.
\end{align*}
Note that we include one side of the boundary of $N$ in $N$ but not 
the other.
This is true for each of the four sectors.

Let $v=\max(|u_1|,|u_2|)$. For $(u_1,u_2)\in N$ we see that $v=u_2$ and
$u_1+u_2>0$. Using the first line of \eqref{E2} this shows $dv=du_2\le 0$.

For $(u_1,u_2)\in W$
we have $-(u_1-u_2)=-u_1+u_2>0$, and using the second line of \eqref{E2}
we conclude $du_1\ge 0$, so $dv=-du_1\le 0$. Note that the ray $u_2=-u_1, u_2>0$ is included
in $W$ but not $N$.

We argue similarly for $E$ and $S$. Hence $v(0)=\max(|u_1(0)|,|u_2(0)|)=0$, $v(t)\ge 0$ for all $t$, 
and $dv\le 0$ on the set $\{t: v(t)\ne 0\}$. 
This means that $v$ is non-increasing on $\{t: v(t)\ne 0\}$, so $v$
must be identically 0.
Therefore  $u$ is identically 0,
so $m=\ol m$, and then $g=\ol g$.
\end{proof}

\begin{remark}\label{uniq-summary}{\rm The results concerning
Question \ref{Q1} in two dimensions are the following.
Suppose $R$ is given by \eqref{intro-form}.
There are five cases to consider:\\
(1) $|a_1a_2|<1$;\\
(2) $|a_1a_2|=1$, $a_1,a_2$ are of opposite signs;\\
(3) $|a_1a_2|=1$, $a_1,a_2$ are both positive;\\
(4) $|a_1a_2|>1$, $a_1,a_2$ are of opposite signs;\\
(5) $|a_1a_2|>1$, $a_1,a_2$ are both positive.\\
\noindent ($R$ will not be
completely-$\sS$ if $|a_1a_2|\ge 1$ and $ a_1,a_2$ are both negative.)

Uniqueness holds in Case (1) by \cite{HR,W95}. The results of Mandelbaum 
discussed above together with Lemma \ref{uniq-L1} take care of Cases (2) and (4).
K.~Burdzy and I \cite{BaBu2} have recently resolved Cases (3) and (5) (uniqueness for $g$ but not
$m$ in Case (3); non-uniqueness for Case (5)).
}
\end{remark}

\begin{remark}\label{r-uniq}{\rm For dimensions larger than 2, the result of
\cite{HR} and \cite{W95} still holds: if the spectral radius of $|Q|$ is 
strictly less than 1, uniqueness holds. 
As far as we know, the cases where the spectral radius is greater than or 
equal to  one are largely open.

It is not known if the  proof given in this section can be extended to higher dimensions for the case where the spectral dimension is exactly one.}
\end{remark}

\begin{remark}\label{uniq-BM}{\rm When there is uniqueness for every driving function, there will be uniqueness when $f$ is replaced by the path of 
a Brownian motion.
It is conceivable, however, that a matrix $R$ could be such that there is not uniqueness for every driving function, but that there is uniqueness almost surely
when $f$ is
a Brownian path. There is not much known here. See \cite{BaBu},
where it is shown that for a large class of matrices $R$ pathwise uniqueness
does not hold for almost every Brownian path.

There is a notion of weak uniqueness in probability theory which, not surprisingly, is weaker than the notion of pathwise uniqueness. Weak uniqueness holds for
every matrix $R$ which is completely--$\sS$; see Taylor and Williams \cite{TW} for 
definitions and proofs.}
\end{remark}

\bibliographystyle{alpha}

\end{document}